\newcommand{\cal}[1]{\mathcal{#1}}
\theoremstyle{plain}
\newtheorem*{theo}{Theorem}
\newtheorem{lemma}{Lemma}[section]
\newtheorem{proposition}[lemma]{Proposition}
\theoremstyle{definition}
\let\egthree=\phi
\let\phi=\varphi
\let\varphi=\egthree
\begin{document}
\title{Distance in the curve graph}
\author{Ursula Hamenst\"adt}
\thanks{Partially supported by the Hausdorff Center Bonn\\
AMS subject classification:30F60}
\date{May 17, 2012}

\begin{abstract}
We estimate the distance
in the curve graph of a surface $S$ of finite type 
up to a fixed
multiplicative constant using Teichm\"uller geodesics.
\end{abstract}

\maketitle

\section{Introduction}

The \emph{curve graph} of an oriented surface $S$ of genus $g\geq 0$
with $m\geq 0$ punctures and $3g-3+m\geq 2$ 
is the metric graph $({\cal C\cal G},d_{\cal C\cal G})$
whose vertices 
are isotopy classes of 
essential (i.e. non-contractible and not 
puncture parallel) simple
closed curves on $S$. Two such 
curves are connected by an edge of length one if and only
if they can be realized disjointly. 
The curve graph 
is a hyperbolic geodesic metric space of 
infinite diameter \cite{MM99}.
It turned out to be an important tool
for understanding the geometry of the 
\emph{mapping class group} of $S$
\cite{MM00}.

The \emph{Teichm\"uller space} 
${\cal T}(S)$ of $S$ is the space 
of all complete
finite volume marked hyperbolic metrics on $S$.
Let $P:{\cal Q}^1(S)\to {\cal T}(S)$ be the 
bundle of marked area one \emph{quadratic differentials}. 
The bundle ${\cal Q}^1(S)$ can naturally be 
identified with the unit cotangent bundle of ${\cal T}(S)$
for the \emph{Teichm\"uller metric}. 
In particular, every $q\in {\cal Q}^1(S)$ determines
the unit cotangent line of a \emph{Teichm\"uller geodesic}. 

An area one quadratic differential
$q$ defines a \emph{singular euclidean metric}
on $S$ of area one.
Call a simple closed curve $\alpha$
\emph{$\delta$-wide} for $q$
if $\alpha$ is the
core curve of an embedded annulus of width at least 
$\delta$ with respect to the metric $q$. Here the width
of an annulus is the minimal distance between 
the two boundary circles. 
For small enough $\delta$, a $\delta$-wide curve exists 
for all $q\in {\cal Q}^1(S)$
(Lemma 5.1 of \cite{MM99}).

A map
$\Upsilon:{\cal Q}^1(S)\to {\cal C\cal G}$
which associates to a marked area one
quadratic differential $q$ a 
$\delta$-wide curve for $q$
relates the Teichm\"uller metric on ${\cal T}(S)$
to the metric on ${\cal C\cal G}$.  Namely, 
there is a number $L>1$ 
such that for every 
unit cotangent line $(q_t)\subset {\cal Q}^1(S)$ 
of a Teichm\"uller geodesic, the assignment 
$t\to \Upsilon(q_t)$ is a coarsely Lipschitz
\emph{unparametrized $L$-quasi-geodesic} 
(this is an immediate consequence of 
Theorem 2.6 of \cite{MM99}).
This implies that there is a number $\kappa>0$ 
not depending on the geodesic such that
\begin{align}\label{kappa}
d_{\cal C\cal G}(\Upsilon(q_s),\Upsilon(q_t))& 
\leq \kappa\vert t-s\vert +\kappa
\text{ and }\\
d_{\cal C\cal G}(\Upsilon(q_s),\Upsilon(q_t))
+d_{\cal C\cal G}(\Upsilon(q_t),\Upsilon(q_u))
& \leq d_{\cal C\cal G}(\Upsilon(q_s),\Upsilon(q_u))+\kappa\notag
\end{align}
for $s\leq t\leq u$ (see Lemma 2.5 of \cite{H10} for 
a detailed discussion).

In general these unparametrized
quasi-geodesics are not quasi-geodesics with their
proper parametrizations. More precisely, 
for $\epsilon>0$ let 
${\cal T}(S)_\epsilon\subset {\cal T}(S)$ be the set  
of all complete finite volume marked hyperbolic 
metrics on $S$ whose \emph{systole}
(i.e. the shortest length of a simple closed
geodesic) is at least $\epsilon$. 
Then for a given number $L^\prime>L$, there exists a number
$\epsilon=\epsilon(L^\prime)>0$ such that the unparametrized 
quasi-geodesic in ${\cal C\cal G}$ 
defined by the image under $\Upsilon$ of the cotangent line
of a Teichm\"uller geodesic $\gamma$ 
is a \emph{parametrized} $L^\prime$-quasi-geodesic
only if $\gamma$ entirely remains in
${\cal T}(S)_\epsilon$
\cite{MM99,H10}. In particular, 
$d_{\cal C\cal G}(\Upsilon(q_s),\Upsilon(q_t))$ may be
uniformly bounded even though $\vert s-t\vert$ is arbitrarily large.

Nevertheless, the estimate (\ref{kappa}) allows to construct
for any two points $\xi,\zeta\in {\cal C\cal G}$ a parametrized
uniform quasi-geodesic connecting
$\xi$ to $\zeta$ as follows. 

Note first that it is 
very easy to decide whether the distance in the curve
graph between $\xi$ and $\zeta$ is at 
most two. Namely, in this case there is an essential simple
closed curve $\alpha$ which can be
realized disjointly from $\xi,\zeta$.

If the distance between $\xi,\zeta$ is at least three then 
$\xi,\zeta$ 
define the unit cotangent line $(q_t)$ of a Teichm\"uller geodesic  
whose \emph{horizontal} and \emph{vertical}  measured foliations
are one-cylinder foliations with core curves homotopic to 
$\xi,\zeta$, respectively.
For any sequence $t_0<\dots <t_n$ so that
$d_{\cal C\cal G}(\Upsilon(q_{t_i}),\Upsilon(q_{t_{i+1}}))\geq 2\kappa$
for all $i$ and $d_{\cal C\cal G}(\Upsilon(q_s)),\Upsilon(q_t))<2\kappa$
for all $s,t\in (t_i,t_{i+1})$,
a successive application of the second part of the 
estimate (\ref{kappa}) implies that
\[d_{\cal C\cal G}(\Upsilon(q_{t_0}),\Upsilon(q_{t_n}))\geq n\kappa.\]
Combined  with the first part of (\ref{kappa}), 
the map $s\in [i,i+1)\to \Upsilon(q_{t_i})$ is a uniform
quasi-geodesic in ${\cal C\cal G}$.

Unfortunately, up to date no bounds for 
the number $\kappa>0$ are known.
Moreover, the curve graph is locally infinite, and 
although there are algorithms which compute
the distance between two curves in the
curve graph explicitly (see \cite{S04} for an example of such an
algorithm), these algorithms depend on the calculation
of so-called \emph{tight geodesics}. 
 Unlike the problem of deciding
whether or not the distance in the curve graph is
at least three, 
determining whether or not
the distance in ${\cal C\cal G}$ between two given curves is
bigger than a given number $\beta\geq 4$
does not seem to be easier than calculating their
distance explicitly.

The goal of this note is to show that 
we can use Teichm\"uller geodesics to calculate the distance
between two points in ${\cal C\cal G}$ up to a fixed multiplicative
constant by only detecting pairs of curves
of distance at least three. We show

\begin{theo}\label{main}
There is a number $\theta>0$ with the following property.
Let $(q_t)\subset {\cal Q}^1(S)$ be the unit cotangent line of 
a Teichm\"uller geodesic. 
Assume that
there are numbers $0=t_0<\dots <t_n=T$ such that for all 
$i<n$ we have 
\begin{enumerate}
\item $t_{i+1}\geq t_{i}+1$ 
\item $d_{\cal C\cal G}(\Upsilon(q_{s}),\Upsilon(q_{t}))\geq 3$ 
for $s\leq t_i, t\geq t_{i+1}$.
\end{enumerate}
Then $d_{\cal C\cal G}(\Upsilon(q_0),\Upsilon(q_T))\geq  n/\theta-\theta$.
\end{theo}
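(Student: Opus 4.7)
The plan is to combine the hyperbolicity of the curve graph with the unparametrized quasi-geodesic property from (\ref{kappa}) and the separation afforded by hypothesis (2), in order to force the marker sequence $\alpha_i := \Upsilon(q_{t_i})$ to produce linear progress of $d_{\cal C\cal G}(\alpha_0, \alpha_n)$ in $n$.

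First I would upgrade condition (2) into two basic facts. Applied with $s = t_i,\ t = t_j$ for any $j > i$, it gives $d_{\cal C\cal G}(\alpha_i, \alpha_j) \geq 3$ for all $i < j$. In its sharper set-versus-set form it says that for every curve $\beta \in {\cal C\cal G}$ the \emph{activity interval} $J_\beta := \{\,t : d_{\cal C\cal G}(\beta, \Upsilon(q_t)) \leq 1\,\}$ is contained in a union of two consecutive blocks $[t_{a-1}, t_a] \cup [t_a, t_{a+1}]$, because two times $s \leq t_i < t_{i+1} \leq t$ both lying in $J_\beta$ would give $d_{\cal C\cal G}(\Upsilon(q_s), \Upsilon(q_t)) \leq 2$, contradicting (2).

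Next I would fix any geodesic $\gamma = (\beta_0, \ldots, \beta_m)$ in ${\cal C\cal G}$ from $\alpha_0$ to $\alpha_n$. Since $t \mapsto \Upsilon(q_t)$ is an unparametrized $L$-quasi-geodesic in the $\delta$-hyperbolic space ${\cal C\cal G}$, the Morse lemma places $\gamma$ inside the $H$-neighborhood of $\Upsilon([0,T])$, for a constant $H = H(\delta, L)$. For each vertex $\beta_k$ of $\gamma$ I would pick a witness time $\sigma_k \in [0,T]$ with $d_{\cal C\cal G}(\beta_k, \Upsilon(q_{\sigma_k})) \leq H$ and let $I(k) \in \{0, \ldots, n-1\}$ be the block index of $\sigma_k$; then $I(0) = 0$ and $I(m) = n-1$.

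The crux of the proof is then a uniform bound $|I(k) - I(k+1)| \leq K$ on index jumps across adjacent vertices of $\gamma$, with $K = K(\delta, L, \kappa)$ depending only on the ambient constants. Granted this bound, $mK \geq n-1$, which gives $d_{\cal C\cal G}(\alpha_0, \alpha_n) = m \geq (n-1)/K$ and hence the theorem with $\theta = K + 1$. To attack the bound I would first apply stability of the sub-quasi-geodesic $\Upsilon([\sigma_k, \sigma_{k+1}])$, whose endpoints lie at distance at most $2H + 1$, to place all intermediate markers $\alpha_{I(k)+1}, \ldots, \alpha_{I(k+1)-1}$ in a ball of uniform radius $C_1 = C_1(\delta, L, H)$ around $\beta_k$, and then iterate the second inequality of (\ref{kappa}) along this chain to obtain $\sum_{j} d_{\cal C\cal G}(\alpha_j, \alpha_{j+1}) \leq (2H+1) + (K-1)\kappa$.

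The hard part will be to turn this into a genuine upper bound on $J := I(k+1) - I(k)$: since the curve graph is locally infinite, a ball of radius $C_1$ can contain arbitrarily many pairwise distance-$3$ curves, so a direct counting argument is not enough, and the crude sub-additivity bound only yields something nontrivial when $\kappa < 3$. The plan is to combine the activity-interval rigidity from the first step, which restricts every curve in the ball to see at most two consecutive blocks, with the iterated sub-additivity, so that markers in the ball are "pinned" to intervals meeting at most two blocks each and a covering argument inside the restricted image of $\Upsilon$ (rather than in the full curve graph) produces the desired bound $J \leq K$.
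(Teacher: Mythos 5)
Your reduction to a bound $|I(k+1)-I(k)|\leq K$ is formally correct, and you correctly isolate the obstruction: balls in ${\cal C\cal G}$ are infinite and may contain arbitrarily many pairwise distance-$3$ curves, so neither counting nor iterating the sub-additivity in (\ref{kappa}) closes the argument (the latter only gives information when $\kappa<3$, which one has no right to assume). The proposed fix --- combining ``activity-interval rigidity'' with ``a covering argument inside the restricted image of $\Upsilon$'' --- is not an argument; it is a description of the difficulty. The activity-interval observation says each individual $\alpha_j$ is $\leq 1$-close to $\Upsilon(q_t)$ for $t$ in at most two consecutive blocks, but this imposes no bound whatsoever on how many distinct $j$ can have $\alpha_j$ within distance $C_1$ of a single geodesic vertex $\beta_k$, which is precisely the quantity you need to bound. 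A Teichm\"uller geodesic that lingers in the thin part of ${\cal T}(S)$ can stall the image of $\Upsilon$ in a bounded set for arbitrarily long, and during such a stall nothing in your setup prevents the block count from growing; hyperbolicity and the Morse lemma see only the coarse image in ${\cal C\cal G}$ and are blind to the mechanism that generates new blocks during a stall.

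What actually controls this, and what the paper uses, is the subsurface-projection machinery. Via Rafi's description of Teichm\"uller geodesics (Lemma~\ref{wide} and Lemma~\ref{largesize} in the paper), on any time window of a fixed length $R$ one of three things happens: (i) $\Upsilon$ advances a definite amount in ${\cal C\cal G}$; (ii) the wide curve lies in a proper subsurface $Y$ with ${\rm diam}(\pi_Y(\xi\cup\zeta))\geq K_0$; or (iii) the wide curve is the core of a very fat flat cylinder, which again forces a large annular projection. By the Masur--Minsky bounded geodesic image theorem (Theorem 3.1 of \cite{MM00}), each such $Y$ forces any geodesic between the endpoints to pass through a curve disjoint from $Y$, hence within distance $1$ of the corresponding $\Upsilon(q_s)$; the separation hypothesis (2) then makes these forced points on the geodesic pairwise distinct after skipping a bounded number of blocks, and counting them gives the linear lower bound. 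It is the bounded geodesic image theorem, not the Morse lemma, that pins the blocks to distinct locations on the geodesic; without it your scheme does not close.
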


Let again $(q_t)$ be the unit cotangent line 
of a Teichm\"uller geodesic. For $T>0$,
construct recursively a sequence
$0=t_0<\dots <t_u=T$ as follows. If $t_i$ has already
been determined, let $t_{i+1}$ be the smallest number contained in the
interval 
$[t_i+1,T]$ so that 
$d_{\cal C\cal G}(\Upsilon(q_{s}),\Upsilon(q_{t}))\geq 3$
for all $s\leq t_i,t\geq t_{i+1}$. If no such number exists put $u=i$.
By the first part of the 
estimate (\ref{kappa}), 
for each $s\leq t_\ell$ there is at least one $t\leq t_{\ell-1}$ so that
$d_{\cal C\cal G}(\Upsilon(q_{s}),\Upsilon(q_{t}))\leq 3+2\kappa$.
A successive application of the triangle inequality shows that
$d_{\cal C\cal G}(\Upsilon(q_{T}),\Upsilon(q_0))\leq u(3+2\kappa)+3$.
On the other hand,  we have 
 $d_{\cal C\cal G}(\Upsilon(q_{T}),\Upsilon(q_0))\geq u\theta-1/\theta$ by 
the Theorem.

The proof of this result 
relies on the work of Minsky \cite{M92} and
Rafi \cite{R10}. The basic idea is that $\delta$-wide
curves for a quadratic differential $q$ detect "active" 
subsurfaces in the
sense of \cite{R10} for the Teichm\"uller geodesic
 defined by $q$. However, such active subsurfaces
may intersect in a complicated way, and making this
simple idea rigorous and quantitative is the main 
contribution of this work. Unfortunately
the proof of the theorem does not give an
explicit bound for the number $\theta$.



\section{Teichm\"uller geodesics and  the curve graph}

Let $S$ be an oriented surface of
genus $g\geq 0$ with $m\geq 0$ punctures and 
of \emph{complexity} $\xi(S)=3g-3+m\geq 1$.
In the sequel we always mean by a simple closed curve 
on $S$ an essential simple closed curve.
Moreover, we identify curves which are isotopic unless
specifically stated otherwise. 

We denote by ${\cal C\cal G}$ the curve graph of $S$ and by  
$d_{\cal C\cal G}$ the distance in ${\cal C\cal G}$.
If $\xi(S)=1$ then $S$ is a once-punctured torus or a
four-punctured sphere and ${\cal C\cal G}$ is the
Farey graph (i.e. two simple closed curves are
connected by an edge if 
they intersect in the 
smallest possible number of points).

Let $P:{\cal Q}^1(S)\to {\cal T}(S)$ be the 
bundle of marked area one  
quadratic differentials over the
Teichm\"uller space ${\cal T}(S)$ of $S$.
Such a quadratic differential on a surface $x\in {\cal T}(S)$ 
is a meromorphic section of
$T^\prime(x)\otimes T^\prime(x)$ with at most simple poles
at the punctures and no other poles. It
defines a singular euclidean metric on $S$ of finite
area, and we require that the area of this metric equals one
(see \cite{S84} for details).
Any two simple closed curves
$\xi,\zeta$ of distance at least $3$ in ${\cal C\cal G}$ define
a one-parameter family $(q_t)$ of area one quadratic differentials 
which form the unit cotangent line of a 
Teichm\"uller geodesic.
These differentials are determined by the
requirement
that the \emph{vertical} and the \emph{horizontal}
measured foliation of $q_t$ is a one-cylinder 
foliation with cylinder curves homotopic to 
$\xi,\zeta$, respectively (see Proposition 3.10 of
\cite{H12} for one of the many places in the 
literature where such quadratic differentials are
constructed explicitly).

For $x\in {\cal T}(S)$ and 
an essential simple closed curve $\alpha$ on $S$ let 
${\rm Ext}_x(\alpha)$ be the \emph{extremal length} of $\alpha$
with respect to the conformal structure defined by $x$.
If $q$ is a quadratic differential  on  $x$ then 
we also write ${\rm Ext}_q(\alpha)$ instead of
${\rm Ext}_x(\alpha)$. We refer to \cite{G87} for more
and references. 

In the sequel we always mean by a quadratic differential a
marked area one quadratic differential. 
The $q$-length $\ell_q(\alpha)$ 
of an essential
simple closed curve $\alpha$ on $S$ is the infimum of the
lengths for the singular euclidean 
metric defined by $q$ of all curves freely
homotopic to $\alpha$. With this terminology,  we have
\[{\rm Ext}_q(\alpha)\geq \ell_q(\alpha)^2\]
for every simple closed curve $\alpha$.

There is a number $\delta>0$ such that
each quadratic differential $q$ admits a 
$\delta$-wide curve \cite{MM99} (see also
\cite{B06} for a more explicit and detailed
discussion), i.e. a simple
closed curve $\alpha$ which is the core curve
of an embedded annulus of width at least $\delta$.
Then the $q$-length of $\alpha$ is at most $1/\delta$.
Define a map 
\[\Upsilon:{\cal Q}^1(S)\to {\cal C\cal G}\]
by associating 
to an area one quadratic differential $q$ a 
$\delta$-wide simple closed 
curve for $q$. 

There is a number $\kappa>0$ such that 
for each cotangent line $(q_t)$ of a Teichm\"uller geodesic,
the estimate (\ref{kappa}) from the introduction holds true.
In particular, if $t>s,k\geq 0$ are such that
$d_{\cal C\cal G}(\Upsilon(q_s),\Upsilon(q_t))\geq k+2\kappa$
then \[d(\Upsilon(q_u),\Upsilon(q_v))\geq k\]
for all $u\leq s<t\leq v$.

For a quadratic differential $q\in {\cal Q}^1(S)$ let 
\[\nu(q)=\inf\{{\rm Ext}_q(\alpha)\mid \alpha\}\]
where $\alpha$ ranges through the essential simple closed
curves. Note that the function $\nu:{\cal Q}^1(S)\to (0,\infty)$
is continuous. Moreover, for every $\epsilon >0$, the
set $\{\nu\geq \epsilon\}$ is closed and invariant
under the action of the \emph{mapping class group}
${\rm Mod}(S)$ of $S$, and the quotient
$\{\nu\geq \epsilon\}/{\rm Mod}(S)$ is compact.

Denote by $\lambda$ the Lebesgue
measure on the real line. 
The following observation can be thought of 
as a quantitative account on progress in 
the curve graph along Teichm\"uller geodesic segments
which spend a definitive proportion of time in the thick
part of Teichm\"uller space.

\begin{proposition}\label{progression}
For every $\epsilon>0,k>0$ and every $b\in (0,1)$
there is a number
$R_0=R_0(\epsilon,k,b)>0$ with the following property.
Let $(q_t)$ be the unit cotangent line of 
a Teichm\"uller geodesic.
If $\lambda\{t\in [0,R]\mid \nu(q_t)\geq \epsilon\}\geq bR$
for some $R\geq R_0$ 
then $d_{\cal C\cal G}(\Upsilon(q_0),\Upsilon(q_{R}))\geq k.$
\end{proposition}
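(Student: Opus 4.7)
The plan is to reduce the Proposition to a \emph{local progress principle} and then assemble the global distance estimate using the near-monotonicity (\ref{kappa}).

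The local progress principle I aim to establish is: for every $\epsilon > 0$ and every $b \in (0,1)$ there exist constants $L = L(\epsilon, b) > 0$ and $c_0 = c_0(\epsilon, b) > 2\kappa$ such that whenever $(q_t)$ is the unit cotangent line of a Teichm\"uller geodesic and $[s, s+L]$ satisfies
\[
\lambda\bigl(\{t \in [s, s+L] : \nu(q_t) \geq \epsilon\}\bigr) \geq (b/2)L,
\]
one has $d_{\cal C\cal G}(\Upsilon(q_s), \Upsilon(q_{s+L})) \geq c_0$. I would prove this by contradiction and compactness. If it failed, there would exist $L_n \to \infty$ and cotangent lines $(q_t^n)_{t \in [0, L_n]}$ spending proportion at least $b/2$ of their time in the $\epsilon$-thick set yet with $d_{\cal C\cal G}(\Upsilon(q_0^n), \Upsilon(q_{L_n}^n)) \leq 2\kappa + 1$; the second inequality of (\ref{kappa}) then forces the entire image $\{\Upsilon(q_t^n)\}$ to lie in a fixed ball $B \subset {\cal C\cal G}$. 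A Fubini--averaging argument selects base-point times $\tau_n \in \{\nu \geq \epsilon\}$ around which the thick set retains positive lower density on neighborhoods of any prescribed size; after translating by suitable elements of ${\rm Mod}(S)$ so that $q_{\tau_n}^n$ lies in a compact piece of $\{\nu \geq \epsilon\}/{\rm Mod}(S)$, a subsequential limit yields a bi-infinite Teichm\"uller geodesic $(q_t^\infty)$ together with a set $E_\infty \subset \mathbb{R}$ of positive lower density on which $\nu(q_t^\infty) \geq \epsilon$ and $\Upsilon(q_t^\infty) \in B$. But every $\delta$-wide curve $\alpha$ has extremal length at most $1/\delta^2$, and by Kerckhoff's theorem the extremal length of any fixed simple closed curve along a Teichm\"uller geodesic is a log-convex function that tends to infinity in both directions. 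Since $B$ contains only finitely many curves, the set $\{t \in \mathbb{R} : \Upsilon(q_t^\infty) \in B\}$ has bounded Lebesgue measure, contradicting the positive lower density of $E_\infty$.

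The assembly step is then routine. Partition $[0, R]$ into $K = \lfloor R/L \rfloor$ consecutive blocks of length $L$. A pigeonhole argument shows the hypothesis $\lambda(\{\nu \geq \epsilon\} \cap [0,R]) \geq bR$ forces at least $N \geq \frac{b}{2-b}K$ of these blocks individually to satisfy the density condition of the local principle. Label their left endpoints $s_1 < \dots < s_N$. Telescoping the second inequality of (\ref{kappa}) across the at most $2N$ interior points $s_1 < s_1+L \leq s_2 < s_2+L \leq \dots \leq s_N < s_N + L$ in $[0, R]$ and applying the local progress principle on each good block yields
\[
d_{\cal C\cal G}(\Upsilon(q_0), \Upsilon(q_R)) \geq N c_0 - 2N\kappa = N(c_0 - 2\kappa).
\]
Since $c_0 > 2\kappa$ and $N \geq \frac{b}{2-b}\lfloor R/L \rfloor$ grows linearly in $R$, a suitable $R_0 = R_0(\epsilon, k, b)$ guarantees the right-hand side is at least $k$ for all $R \geq R_0$.

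The main obstacle is the local progress principle. Converting the qualitative hypothesis of positive density in the thick part into deterministic curve-graph progress is the geometric heart of the argument. The delicate points are (i) the averaging selection of base-point times ensuring that the limit geodesic inherits positive lower density in the thick part, and (ii) controlling the map $\Upsilon$ along the limiting process so that the image ball in ${\cal C\cal G}$ persists. The comparison $\ell_q(\alpha) \leq 1/\delta$ for $\delta$-wide $\alpha$ yields the upper bound $1/\delta^2$ on extremal length, and Kerckhoff's log-convexity then delivers the contradiction. Once the local principle is established, the pigeonhole decomposition and the telescoping use of (\ref{kappa}) are routine bookkeeping.
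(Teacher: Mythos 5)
Your overall strategy (a local progress lemma established by compactness, followed by a pigeonhole--telescoping assembly) parallels the structure of the paper's proof, and your assembly step is sound. However, the contradiction at the end of your compactness argument contains two errors, at least one of which is fatal.

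The decisive flaw is the assertion that the ball $B\subset{\cal C\cal G}$ ``contains only finitely many curves.'' The curve graph is \emph{locally infinite} --- a metric ball of any positive radius contains infinitely many vertices --- so $E_\infty$ being contained in $\bigcup_{\alpha\in B}\{t:\Upsilon(q_t^\infty)=\alpha\}$ gives no bound on $\lambda(E_\infty)$, even if each term in the union had bounded measure. In addition, the claim that the extremal length of every fixed simple closed curve ``tends to infinity in both directions'' along a Teichm\"uller geodesic is false: if $\alpha$ has zero geometric intersection with (say) the vertical foliation of the geodesic, then ${\rm Ext}_{q_t}(\alpha)\to 0$ in one direction. (Your additional constraint $\nu\geq\epsilon$ on $E_\infty$ could be used to patch this second point, but you did not invoke it, and it does nothing for the first.) As a result the limit geodesic you construct is not shown to produce a contradiction, and the local progress principle is unproved.

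The paper sidesteps both obstacles by a different organization of the compactness step. It formulates the local claim as a dichotomy on a \emph{variable-length} subinterval $[0,\sigma]$ with $\sigma\in[1,\tau]$: either $d_{\cal C\cal G}(\Upsilon(q_0),\Upsilon(q_\sigma))\geq k$, or the proportion of $[0,\sigma]$ spent in $\{\nu<\epsilon\}$ is at least $1-b$. Supposing this fails for arbitrarily long intervals, it normalizes by the mapping class group using a thick time in $[0,1]$, extracts a convergent subsequence in ${\cal Q}^1(S)$, and then splits by whether the limiting geodesic is recurrent. If recurrent, Proposition 2.4 of \cite{H10} gives definite curve-graph progress in some finite time $T$, which by continuity of $q\mapsto\Upsilon(q)$ (up to bounded ambiguity) transfers to the approximating geodesics --- no finiteness of curve-graph balls and no Kerckhoff-type convexity are needed. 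If not recurrent, the limit is eventually entirely thin, so the thin proportion over a finite window is at least $1-b$, and continuity transfers this to the approximating geodesics, again a contradiction. If you want to salvage your local principle, you should replace the ``$B$ is finite'' step with a recurrent/non-recurrent case split of this kind, or directly invoke the fact that along recurrent geodesics $\Upsilon$ advances at a definite rate.
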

\begin{proof} Let $\epsilon>0,k>0,b\in (0,1)$. 
We show first that there
is a number 
\[\tau=\tau(\epsilon,k,b)>1\] with the following 
property. For every Teichm\"uller geodesic $(q_t)$ there is
a number $\sigma\in[1,\tau]$
so that either 
$d_{\cal C\cal G}(\Upsilon(q_0),\Upsilon(q_\sigma))\geq k$ or
\[\lambda\{s\in [0,\sigma]\mid \nu(q_s)< \epsilon\}\geq \sigma(1-b).\]

For this we argue by contradiction and we assume that the
claim does not hold. Then there is a sequence $(q^i_t)$ 
of unit cotangent lines of Teichm\"uller geodesics 
so that for each $i>0$, 
the smallest subinterval of $[0,\infty)$ containing $[0,1]$ 
with the property that the above alternative holds true
for $(q^i_t)$  
is of length at least $i$.

Since for each $i\geq 2$ the above alternative does not hold for 
$(q^i_t)$ and $\sigma=1$, there is at least one
$s\in [0,1]$ so that $\nu(q^i_s)\geq \epsilon$. This implies that
the quadratic differentials $q_0^i$ project to a compact subset
of the moduli space of quadratic differentials. 
Thus by invariance under the mapping class group,
after passing to a subsequence we may assume
that the quadratic differentials $q^i_0$ converge as $i\to \infty$
in ${\cal Q}^1(S)$ to a 
quadratic differential $q$. 

Let $\chi>0$ be sufficiently large that for every
quadratic differential $z$, the distance in the curve graph
between any two curves on $S$ of $z$-length at most $2/\delta$ 
is smaller than $\chi$ (see Lemma 2.1 of \cite{H10} 
for a detailed proof of the existence
of such a number).
Let $(q_t)$ be the unit cotangent line
of the Teichm\"uller geodesic with initial velocity $q_0=q$.

If $(q_t)$ is 
\emph{recurrent} (i.e. its projection to the
moduli space of quadratic differentials returns
to a fixed compact set for arbitrarily large times) 
then there is some $T>0$ 
so that
\[d_{\cal C\cal G}(\Upsilon(q_0),\Upsilon(q_T))\geq k+2\chi\]
(see Proposition 2.4 of \cite{H10} for details).
However, by continuity, 
for all sufficiently large $i$ the $q$-length (or $q_T$-length) 
of a curve of 
$q^i_0$-length at most $1/\delta$ (or of a curve of $q^i_T$-length 
at most $1/\delta$) does not exceed $2/\delta$. 
Thus by the choice of $\chi$, 
\[d_{\cal C\cal G}(\Upsilon(q^i_0), \Upsilon(q^i_T))\geq k\]
for all sufficiently large $i$ 
which violates the assumption on the sequence $(q^i_t)$.

On the other hand, if $(q_t)$ is not recurrent then 
there is some $t_0>0$ so that $\nu(q_t)\leq \epsilon/2$ 
for all $t\geq t_0$. Then the proportion of time the 
arc $q_t$ $(t\in[0,t_0/b])$ spends in the region $\{\nu\leq \epsilon/2\}$
is at least $1-b$. By continuity,
for all sufficiently large $i$ the proportion
of time the arc $q_t^i$ $(t\in[0,t_0/b])$ spends in the 
region $\{\nu<\epsilon\}$ is at least $1-b$ as well.
Once again, this is a contradiction.

Let $\kappa>0$ be as in (\ref{kappa}).
Let $b\in (0,1), k>0,\epsilon>0$, 
let $\tau=\tau(\epsilon,b/2,k+2\kappa)$ 
and let $R> 2\tau/b$. Let $(q_t)$ 
be the unit cotangent line of any Teichm\"uller geodesic.
There are successive numbers $0=t_0<t_1<\dots<t_u\leq R$ so that
$R-t_u\leq \tau$ and that 
for all $i<u$, $1\leq t_{i+1}-t_i\leq \tau$ and either 
\begin{equation}\label{innerinterval}
d_{\cal C\cal G}(\Upsilon(q_{t_i}),\Upsilon(q_{t_{i+1}}))\geq 
k+2\kappa\end{equation}
or $\lambda\{t\in [t_i,t_{i+1}]\mid \nu(q_t)<\epsilon\}\geq 
(t_{i+1}-t_i)(1-b/2).$

By the choice of $\kappa$, if there is some $i<u$ such that
the inequality (\ref{innerinterval}) holds true then
$d_{\cal C\cal G}(\Upsilon(q_0),\Upsilon(q_R))\geq k$.
Otherwise since  $R-t_u\leq \tau< bR/2$, we have
$\lambda\{t\in [0,R]\mid \nu(t)<\epsilon\}> R(1-b)$.
The proposition follows.
\end{proof}

For small $\epsilon >0$, the (hyperbolic) length of a simple
closed curve on a hyperbolic surface $x\in {\cal T}(S)$
is roughly proportional to its extremal length. Thus by the 
collar lemma, 
there is a number $\epsilon_0<\delta^2/2$ such
that for $x\in {\cal T}(S)$, any two simple closed
curves on $(S,x)$ of extremal length
at most $\epsilon_0$ can be realized
disjointly.

For $\epsilon \leq \epsilon_0$ let 
\[{\cal A}_\epsilon={\cal A}_\epsilon(x)\] 
be the set of all simple closed 
curves on $(S,x)$ of extremal length less than $\epsilon$. Then 
$S-{\cal A}_\epsilon$ is a union ${\cal Y}$ of 
connected components (this is meant to be a topological
decomposition, i.e. we cut $S$ open along disjoint representatives
of the curves of small extremal length). 
A component $Y\in{\cal Y}$ is called
an \emph{$\epsilon$-thick component} for $x$, and 
$({\cal A}_\epsilon,{\cal Y})$ is the 
\emph{$\epsilon$-thin-thick 
decomposition} of $(S,x)$. Note that for $\chi<\epsilon$,
a $\chi$-thick component is a union of $\epsilon$-thick components.

If $q$ is a quadratic differential
with underlying conformal structure $x$ then we also 
call $({\cal A}_\epsilon,{\cal Y})$ the 
$\epsilon$-thin-thick decomposition 
for $q$, and we call a component $Y$ of ${\cal Y}$ 
an $\epsilon$-thick component for $q$. More generally, we call a
non-peripheral incompressible open connected
subsurface $Y$ of $S$ \emph{$\epsilon$-semi-thick} for $q$ if 
the extremal length of each boundary circle of $Y$ is at most $\epsilon$.
Then the result of cutting $Y$ along all the curves of extremal
length (as curves in $S$) at most $\epsilon$ is a union of $\epsilon$-thick
components.

Let again $q$ be a quadratic differential on $S$.
For $\epsilon\leq \epsilon_0$ and 
an $\epsilon$-semi-thick subsurface $Y\subset S$ for 
$q$ let ${\rm \bf Y}$ be the representative
of $Y$ with $q$-geodesic boundary which 
is disjoint from the interiors of the (possibly degenerate)
flat cylinders foliated
by simple closed $q$-geodesics homotopic to the boundary 
components of $Y$. We call ${\rm \bf Y}$ the 
\emph{geometric representative} of $Y$.
Following Rafi \cite{R07}, if $Y$ is not
a pair of pants then 
we define
${\rm size}_q(Y)$ to be the 
shortest $q$-length of an essential
simple closed curve in 
${\rm \bf Y}$.  If $Y$ is a pair of pants then we define
${\rm size}_q(Y)$ to be the diameter of $Y$.

An \emph{expanding annulus} for a simple closed curve 
$\alpha\subset (S,q)$ is an embedded
annulus $A\subset S$ homotopic to $\alpha$
with the following property. One boundary component
of $A$ is a $q$-geodesic $\psi$, and the second boundary
component is a curve which is equidistant to $\psi$. Moreover,
$A$ does not intersect the interior of a flat cylinder 
foliated by closed $q$-geodesics homotopic to $\alpha$
(see \cite{M92} for details).
 
By Theorem 3.1 of \cite{R10}, 
up to making $\epsilon_0$ smaller we 
may assume that for every quadratic differential $q$ and every
$\alpha\in {\cal A}_{\epsilon_0}={\cal A}_{\epsilon_0}(q)$ 
(this means that we calculate extremal length for the 
conformal structure underlying $q$)
the following holds true.

Assume that $Y,Z$ are the 
$\epsilon_0$-thick components of 
$(S,q)$ which contain $\alpha$ in their boundary.
Note that $Y,Z$ are not necessarily distinct.
Let $E,G$ be the \emph{maximal expanding 
annuli} in the geometric representatives ${\rm \bf Y,Z}$ of 
$Y,Z$ homotopic to $\alpha$. This
means that $E,G$ are expanding annuli contained in 
${\rm \bf Y,Z}$ which are as big as possible, i.e. which
are not proper subsets of another expanding annulus.
If $Y=Z$ then these annuli are required to lie
on the two different sides of $\alpha$ with respect
to an orientation of $\alpha$ and the orientation of $S$. 
Then
\begin{align}\label{extremallength}
\frac{1}{{\rm Ext}_q(\alpha)} &\asymp
\log \frac{{\rm size}_q(Y)}{\ell_q(\alpha)}+
\frac{{\rm size}_q(F_q(\alpha))}{\ell_q(\alpha)}+
\log\frac{{\rm size}_q(Z)}{\ell_q(\alpha)}\\
&\asymp
{\rm Mod}_q(E)+{\rm Mod}_q(F_q(\alpha))+
{\rm Mod}_q(G)\notag\end{align}
where $F_q(\alpha)$ is the (possibly degenerate) flat cylinder
foliated by simple closed geodesics homotopic to $\alpha$ and
where ${\rm Mod}_q$ is the modulus with
respect to the conformal structure defined by $q$. Also, 
$\asymp$ means 
equality up to a uniform multiplicative constant.

Let $M_0>0$ be sufficiently large so that Theorem 5.3 and 
Corollary 5.4 of \cite{R10} hold true for this $M_0$. 
For $M\geq M_0$ and 
for $\epsilon\leq \epsilon_0$, 
a boundary curve $\alpha$ of an 
$\epsilon$-semi-thick 
subsurface $Y$ of $S$ is called \emph{$M$-large}
if  ${\rm Mod}(E)\geq M$ where as before, $E$ is a maximal
expanding annulus homotopic to $\alpha$
in the geometric representative
${\rm \bf Y}$ of $Y$.
An $\epsilon$-semi-thick subsurface $Y$ is \emph{$M$-large}
if $Y$ is not a pair of pants and if 
each of its boundary circles is $M$-large.
We also require that $Y$ is non-trivial, i.e.
that $Y\not=S$.

Since $\epsilon_0<\delta^2/2$ by assumption
and since 
the $q$-length of a simple closed curve $\alpha$ is at most
$\sqrt{{\rm Ext}_q(\alpha)}$, 
a $\delta$-wide simple closed curve $\alpha$
can not have an essential intersection with
a boundary curve of an $\epsilon_0$-thick component
of $S$. As a consequence,
a geodesic representative of $\alpha$ either
is contained in the geometric representative 
${\bf Y}$ of 
an $\epsilon_0$-thick component $Y$ 
of $S$, or it is homotopic to a boundary circle
of such a component.

In the case that $\alpha$ is contained in 
the geometric representative ${\bf Y}$ of an
$\epsilon_0$-thick component $Y$ of $S$ and 
is not homotopic to a
boundary component of $Y$, the embedded annulus of 
width $\delta$ which is 
homotopic to $\alpha$ can intersect
a boundary component of $Y$ only in a set of small
diameter. As a consequence, up to 
adjusting the number $\epsilon_0$, the $q$-diameter
of $Y$ is bounded from below by a universal constant.

If ${\rm Ext}_q(\alpha)\leq \epsilon_0$ then there are two
(not mutually exclusive) possibilities. In the first  case, 
$\alpha$ is the core curve of a flat cylinder of small 
circumference whose width is uniformly bounded from 
below. Such a cylinder has a large modulus. 
The second possibility is that there is an expanding
annulus homotopic to $\alpha$ whose width is uniformly
bounded from below. Then this annulus is contained
in the geometric representative ${\bf Y}$ of an
$\epsilon_0$-thick component $Y$ of $S$. 
The $q$-diameter of
${\bf Y}$ is bounded from below
by a universal constant. 

This observation is used to show

\begin{lemma}\label{wide}
For every $M>0,\chi>0$ there is a number 
$\epsilon_1(\chi,M)<\epsilon_0$ 
with the 
following property. Let $q\in {\cal Q}^1(S)$ and assume
that there
is a simple closed curve $\beta$
with ${\rm Ext}_{q}(\beta)<\epsilon_1(\chi,M)$.
Let $\alpha$ be a 
$\delta$-wide curve for $q$; then 
up to isotopy, either $\alpha$
is contained
in an $M$-large subsurface of $S-\beta$ or
$\alpha$ is contained in a flat cylinder of 
modulus at least $\chi$.
\end{lemma}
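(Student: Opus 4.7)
The plan is to iteratively construct the desired $M$-large subsurface by starting from the $\epsilon_1$-thick component of $(S,q)$ containing $\alpha$ and annexing adjacent thick components across boundary curves whose expanding annulus on the outside has large modulus, using the Minsky--Rafi decomposition (\ref{extremallength}) as the main quantitative tool. The discussion immediately preceding the lemma (using $\epsilon_1 < \epsilon_0 < \delta^2/2$ and ${\rm Ext}_q(\gamma) \ge \ell_q(\gamma)^2$) shows that a $\delta$-wide curve $\alpha$ cannot essentially intersect any simple closed curve of extremal length less than $\epsilon_0$; so up to isotopy either $\alpha = \beta$ or $\alpha$ is disjoint from $\beta$.

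Choose $\epsilon_1 = \epsilon_1(\chi, M) > 0$ small enough that, for every simple closed curve $\gamma$ with ${\rm Ext}_q(\gamma) < \epsilon_1$, the estimate (\ref{extremallength}) forces the dichotomy: \emph{either} ${\rm Mod}_q(F_q(\gamma)) \ge \chi$, \emph{or} at least one maximal expanding annulus homotopic to $\gamma$ in an $\epsilon_1$-thick component adjacent to $\gamma$ has modulus at least $M$; explicitly, $\epsilon_1 < c/(2M+\chi)$ suffices, where $c$ is the constant in (\ref{extremallength}). When $\alpha = \beta$, applying this dichotomy to $\gamma = \alpha$ immediately gives either conclusion (2) of the lemma or, via the expanding-annulus alternative, places $\alpha$ after a small isotopy into the adjacent $\epsilon_1$-thick component with $\alpha$ as an $M$-large boundary, reducing to the main case below.

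For the main case $\alpha \neq \beta$ and disjoint from $\beta$, let $Y_0$ be the $\epsilon_1$-thick component of $(S,q)$ containing $\alpha$, so that $Y_0 \subset S \setminus \beta$. Build an increasing sequence $Y_0 = W_0 \subset W_1 \subset \cdots \subset S \setminus \beta$ recursively: if every boundary curve of $W_i$ is $M$-large from the $W_i$-side, stop and set $W = W_i$; otherwise pick a non-$M$-large boundary curve $\gamma$ of $W_i$ and apply the dichotomy to $\gamma$. The flat-cylinder alternative ${\rm Mod}_q(F_q(\gamma)) \ge \chi$ is reduced to conclusion (2) by the argument of the next paragraph; in the other alternative, the expanding annulus on the far side of $\gamma$ has modulus $\ge M$, so we annex the corresponding thick component to form $W_{i+1}$. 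The procedure terminates because each annexation strictly decreases the topological complexity of $S \setminus W_i$, and $\beta$ is never annexed since the dichotomy applied to $\beta$ itself either triggers the flat-cylinder reduction or leaves $\beta$ as an $M$-large boundary of $W$.

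The main obstacle is carrying out the flat-cylinder reduction: when a boundary curve $\gamma$ of the growing subsurface has ${\rm Mod}_q(F_q(\gamma)) \ge \chi$ but $\gamma$ is not isotopic to $\alpha$, we must deduce that $\alpha$ itself is the core of a flat cylinder of modulus at least $\chi$. The plan is to exploit the embedded $\delta$-wide annulus around $\alpha$: since $\alpha$ is disjoint from $\gamma$, this annulus lies in the thick component on $\alpha$'s side of $\gamma$, and the smallness of the expanding annulus there (forced by $\gamma$ being non-$M$-large from inside $W_i$), combined with the total area bound of one and the universal lower bound on the $q$-diameter of an $\epsilon_1$-thick component discussed before the lemma, should force the $\delta$-wide annulus around $\alpha$ to sit inside a flat cylinder homotopic to $\alpha$ of modulus at least $\chi$, after appropriate further shrinking of $\epsilon_1$.
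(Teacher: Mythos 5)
Your overall skeleton (start with the thick component containing $\alpha$, annex across boundary curves whose inside expanding annulus has small modulus, using (\ref{extremallength}) as the quantitative driver) matches the paper's approach, and your preliminary observation that $\alpha$ must be disjoint from $\beta$ up to isotopy is correct. But the proposal has a genuine gap at exactly the step you flag as ``the main obstacle,'' and it is not just a technical difficulty to be smoothed over --- the reduction you propose is false.

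Concretely: when a boundary curve $\gamma$ of the growing subsurface $W_i$ satisfies ${\rm Mod}_q(F_q(\gamma))\geq \chi$ with $\gamma$ not isotopic to $\alpha$, it simply does not follow that $\alpha$ is the core of a large flat cylinder. One can have $\alpha$ an essential curve well inside a thick component whose diameter is large (so $\alpha$ is $\delta$-wide via an annulus that is nowhere close to flat), while a boundary curve $\gamma$ of that component has a huge flat cylinder. The area-one constraint and the diameter lower bound do not push the $\delta$-wide annulus around $\alpha$ into $F_q(\alpha)$. A related, unaddressed problem: your dichotomy only guarantees that \emph{some} term on the right side of (\ref{extremallength}) is large, so for the hypothesis curve $\beta$ it may well be the expanding annulus on the side \emph{opposite} $W_i$ that has modulus $\geq M$ while ${\rm Mod}_q(F_q(\beta))<\chi$ and the $W_i$-side annulus is small; in that case your rule would direct you to annex across $\beta$, destroying the containment $W\subset S\setminus\beta$.

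The paper's proof avoids both problems by not applying a flat-cylinder dichotomy to boundary curves at all. It treats the flat-cylinder alternative \emph{only for $\alpha$ itself}, right at the outset: if $\alpha$ has a flat cylinder of width $\geq\delta/3$, conclusion (2) holds and we are done; otherwise the $\delta$-wide annulus forces a universal lower bound $\sigma_0(\chi)$ on the $q$-diameter of the $\nu$-thick component $Y_0$ containing $\alpha$, hence (by \cite{R07}) a lower bound ${\rm size}_q(Y_0)\geq\sigma_1(\chi)$. This size bound is the ingredient your argument is missing: combined with (\ref{extremallength}), it shows that any non-$M$-large boundary $\gamma$ of the growing subsurface must have $\ell_q(\gamma)\geq\sigma_2(\chi,M)$, i.e.\ ${\rm Ext}_q(\gamma)\geq\sigma_2^2$. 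One then takes $\epsilon_1(\chi,M)<\sigma_2^2$. The curve $\beta$ with ${\rm Ext}_q(\beta)<\epsilon_1$ therefore can never be a non-$M$-large boundary, so it is never annexed across; and if the annexation were to exhaust $S$, every essential curve would have length bounded below by a universal constant, contradicting the existence of $\beta$. No case analysis of $F_q(\gamma)$ is ever needed. To repair your proposal you would need to replace the per-curve dichotomy and flat-cylinder reduction by this size-propagation argument.
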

\begin{proof} For $\chi>0$ let 
$\nu\leq \epsilon_0$ 
be sufficiently small that the modulus of a flat
cylinder of circumference $\sqrt{\nu}$ and 
width $\delta/3$ is at least $\chi$.

Let $\alpha$ be a $\delta$-wide curve
for the quadratic differential $q$. By the choice 
of $\epsilon_0$, there is a $q$-geodesic representative
of $\alpha$ which is contained in  
a $\nu$-thick component $Y$ for $q$. 

If $\alpha$ is homotopic to a boundary circle of $Y$ 
and if moreover $\alpha$ is the core curve of a 
flat cylinder of width at least $\delta/3$ 
then $\alpha$ is the core curve of a flat
cylinder of modulus at least $\chi$ and 
we are done.

For the remainder of this proof suppose that
no such cylinder exists.
The discussion preceding this lemma shows that
in this case we may assume 
without loss of generality that 
the $q$-diameter of $Y$ is bounded from below by
a universal constant
$\sigma_0=\sigma_0(\chi)>0$.

By the main result of \cite{R07}, since 
there are no essential simple closed curves in $Y$
of extremal length smaller than $\nu$,
${\rm size}_q(Y)$
is comparable to the $q$-diameter of $Y$
(with a comparison factor depending on $\nu$).
Now the $q$-diameter of $Y$ is at least $\sigma_0$
and therefore ${\rm size}_q(Y)$ is bounded from 
below by a number $\sigma_1\leq \sigma_0$ only
depending on $\nu$ and hence only depending on $\chi$.

Let $M>0$ be arbitrary. 
If each boundary circle of $Y$ is 
$M$-large then we are done.
Otherwise let $\beta$ be a boundary component 
of $Y$ which is not $M$-large. 
Since ${\rm size}_q(Y)\geq \sigma_1$, the
estimate (\ref{extremallength}) shows that 
$\ell_q(\beta)\geq \sigma_2$ for a number $\sigma_2>0$ only
depending on $\sigma_1$. In particular, we have
${\rm Ext}_q(\beta)\geq \sigma_2^2$. 

If $\beta$ is non-separating and defines two distinct 
free homotopy classes in $Y$
then $Y\cup \beta=Y_1$ is a subsurface of $S$ containing 
$\alpha$. Since the length of $\beta$ is bounded from
below by a universal constant, the size of $Y\cup Y_1$ 
is bounded from below by a universal constant $\sigma_3>0$
as well. 

Otherwise there is a
$\chi$-thick component $Y^\prime$ of $S$
which contains $\beta$ in its boundary and which is distinct
from $Y$. Using again the fact that the length of 
$\beta$ is uniformly bounded from below, 
the diameter of $Y^\prime$ and hence
the size of $Y^\prime$ is uniformly bounded from below. 
Let  $Y_1=Y\cup Y^\prime$ and note that
$Y_1$ contains $\alpha$, and its size is bounded from 
below by a universal constant $\sigma_4>0$.
In particular, any very short boundary curve
of $Y_1$ is contained in an expanding cylinder of 
modulus at least $M$.

Repeat this reasoning with $Y_1$. In at most $3g-4+m$ 
steps we either  
find an $M$-large subsurface $Y$ of $S$ 
containing $\alpha$ whose size is bounded from below
by a universal constant, 
or we conclude that the shortest $q$-length
of an essential simple closed
curve on $S$ is bounded from below by a universal constant.
Together this shows the lemma.
\end{proof}

Let $X\subset S$
be a non-peripheral, incompressible, 
open connected subsurface which is distinct from 
$S$, a three-holed sphere and
an annulus. 
The \emph{arc and curve complex} 
${\cal C}^\prime(X)$ of $X$ is defined to be the complex
whose vertices are isotopy classes of 
arcs with endpoints on $\partial X$ or essential 
simple closed curves in $X$.
Two such vertices are connected by an edge of length
one if they can be realized disjointly. 
There is a \emph{subsurface projection}
$\pi_X$ of ${\cal C\cal G}$ into 
the space of subsets of  
${\cal C}^\prime(X)$ which associates to 
a simple closed curve the homotopy classes of its
intersection components with $X$ (see \cite{MM00}).
For every simple closed curve $c$, the diameter of $\pi_X(c)$ 
in ${\cal C}^\prime(X)$ is at most one.
Moreover, if $c$ can be realized disjointly from $X$ then 
$\pi_X(c)=\emptyset$.

There also is an arc complex ${\cal C}^\prime(A)$ for 
an essential annulus $A\subset S$,
and there is a 
subsurface projection $\pi_A$ of 
${\cal C\cal G}$ into the space of subsets of 
${\cal C}^\prime(A)$.
We refer to \cite{MM00} for details of this construction.
In the sequel 
we call a subsurface $X$ of $S$  \emph{proper}
if $X$ is non-peripheral, incompressible, open and connected
and different from a three-holed sphere or $S$.

As before, 
let $\delta>0$ be such that for every
$q\in {\cal Q}^1(S)$ there is a $\delta$-wide
curve for $q$. 
The next lemma is a version of Proposition
\ref{progression} for Teichm\"uller geodesic arcs
which are allowed to be entirely contained in the
thin part of Teichm\"uller space.

\begin{lemma}\label{largesize}
For every $\chi>0,k>0$ there 
is a number $T=T(\chi,k)>0$  
with the
following property. Let $q_t$ be 
the cotangent line of a Teichm\"uller geodesic
defined by two simple closed curves
$\xi,\zeta$. For $t\in \mathbb{R}$ let 
$\alpha(t)\in {\cal C\cal G}$ be a $\delta$-wide
curve for $q_t$. Then one of the following
(not mutually exclusive) possibilities is satisfied.
\begin{enumerate}
\item $d_{\cal C\cal G}(\Upsilon(q_0),\Upsilon(q_T))\geq k$.
\item There is a number $t\in[0,T]$ and   
a proper subsurface $Y$ of $S$ containing $\alpha(t)$ 
such that 
${\rm diam}(\pi_Y(\xi\cup \zeta))\geq k$.
\item There is a number $t\in [0,T]$ such that $\alpha(t)$
is the core curve of a flat cylinder of modulus at least 
$\chi$.
\end{enumerate}
\end{lemma}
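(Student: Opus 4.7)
The plan is to combine Proposition \ref{progression} (which converts thick time into curve-graph progress), Lemma \ref{wide} (which locates $\delta$-wide curves inside $M$-large subsurfaces when the quadratic differential has a short curve), and Rafi's active-interval theorems (Theorem 5.3 and Corollary 5.4 of \cite{R10}), relating the length of the maximal interval on which a proper subsurface $Y$ is $M$-large to the subsurface projection diameter ${\rm diam}(\pi_Y(\xi\cup\zeta))$. The argument is a trichotomy: I assume that (1) and (3) both fail, and derive (2), provided $T=T(\chi,k)$ is chosen large enough.

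First I would fix $M\geq M_0$ large enough that Rafi's theorem produces a constant $L=L(k)$ with the property that whenever a proper subsurface $Y$ remains $M$-large throughout a time interval of length at least $L$, one has ${\rm diam}(\pi_Y(\xi\cup\zeta))\geq k$. Then set $\epsilon=\epsilon_1(\chi,M)$ as in Lemma \ref{wide} and $R_0=R_0(\epsilon,k,1/2)$ as in Proposition \ref{progression}, and take $T$ larger than $R_0$ and than a second threshold identified below. Failure of (1) combined with Proposition \ref{progression} forces $\lambda(\Theta)\geq T/2$, where $\Theta=\{t\in[0,T]:\nu(q_t)<\epsilon\}$. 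For each $t\in\Theta$ I apply Lemma \ref{wide} to a curve $\beta(t)$ with ${\rm Ext}_{q_t}(\beta(t))<\epsilon$; since (3) is assumed to fail, this produces an $M$-large proper subsurface $Y(t)\supset\alpha(t)$.

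It then remains to produce some $Y\supset\alpha(t)$ with ${\rm diam}(\pi_Y(\xi\cup\zeta))\geq k$. At any fixed time the $M$-large subsurfaces have boundaries in the set of at most $\xi(S)$ curves of extremal length $<\epsilon_0$, so at most a topological constant $C_0=C_0(S)$ of them can be simultaneously $M$-large. Writing $\mathcal Y=\{Y(t):t\in\Theta\}$ and $I_Y$ for the maximal $M$-active interval of $Y\in\mathcal Y$, integrating the pointwise inequality $1\leq \#\{Y\in\mathcal Y:\alpha(t)\in Y\}\leq C_0$ over $\Theta$ gives $T/2\leq \sum_{Y\in\mathcal Y}|I_Y|$. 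Consequently either some $|I_Y|\geq L$, in which case (2) follows from the choice of $M$ (taking the corresponding $t\in I_Y\cap\Theta$ so that $Y=Y(t)$ contains $\alpha(t)$), or else $|\mathcal Y|\geq T/(2L)$.

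The hard part will be ruling out the second alternative, i.e.\ establishing an a priori bound $|\mathcal Y|\leq N(k,S)$ independent of $T$. This should follow from the hierarchy machinery of \cite{MM00,R10} together with the failure of (1): since the unparametrized quasi-geodesic $t\mapsto\Upsilon(q_t)$ moves only boundedly in ${\cal C\cal G}$, only boundedly many distinct $M$-active proper subsurfaces with $d_Y(\xi,\zeta)<k$ can arise along $[0,T]$ without one of (1)--(3) being triggered. Once such a bound $N=N(k,S)$ is in place, choosing $T>\max(R_0,2NL)$ closes the argument. This counting step — extracting the dependence of $N$ on $k$ from Rafi's combinatorial structure and the curve-graph displacement — is where the technical bulk of the work lies, while all the other steps are fairly direct applications of the tools already assembled above.
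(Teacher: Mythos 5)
You correctly identify the structure of the problem and the main ingredients (Proposition~\ref{progression}, Lemma~\ref{wide}, Rafi's active-interval theorems), and you also correctly flag the one remaining step as ``the technical bulk of the work.'' Unfortunately that step is not merely technical: the a priori bound $|\mathcal Y|\leq N(k,S)$ you ask for simply does not exist, and this is where the proposal breaks. Bounded displacement of $\Upsilon$ in ${\cal C\cal G}$ does \emph{not} bound the number of $M$-active proper subsurfaces. In the Masur--Minsky hierarchy from $\xi$ to $\zeta$, a main geodesic of length $3$ can sit over sub-geodesics in complementary subsurfaces of arbitrary length, which in turn sit over sub-sub-geodesics, and so on; every domain of this hierarchy whose coefficient exceeds a fixed threshold gives an $M$-active interval on the Teichm\"uller geodesic, so the number of $M$-active subsurfaces grows without bound as $T\to\infty$ even when $d_{\cal C\cal G}(\xi,\zeta)=3$. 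Moreover, the subsurface $Y(t)$ produced by Lemma~\ref{wide} is only known to be $M$-large \emph{at the single time} $t$; being $M$-large at one time does not force $d_{Y(t)}(\xi,\zeta)$ to be large (only a long $M$-active interval does, which is precisely your undetermined alternative). So ``many short active intervals'' is genuinely consistent with both the failure of (1) and the failure of (2), and no flat counting argument in $S$ can rule it out.

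The paper closes this gap by replacing the counting with an \textbf{induction on the complexity} $\xi(S)$. The claim proved inductively is measure-theoretic: if $\delta$-wide $\rho$-slim curves exist on a set $A$ of measure $\geq bT_n$, then either the endpoint markings $\mu_0,\mu_{T_n}$ are $k$-far apart in ${\cal C\cal G}$, or on a subset $A'\subset A$ of measure $\geq f(b,n)T_n$ the curve $\alpha(t)$ lies in a proper subsurface with projection diameter $\geq k$. In the inductive step one takes an $M_1$ large enough (via Rafi) that a subsurface $M_1$-large at one moment stays $M_0$-large over a whole $T_{n-1}$-window, pins down a \emph{fixed} $M_0$-large $Y$ over that window by pigeonhole, passes to the induced Teichm\"uller geodesic $q_{t,Y}$ on $Y$ (Theorem 4.2 of [R10]), and applies the induction hypothesis at complexity $\leq n-1$. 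Either the curve-graph diameter of the endpoint markings inside $Y$ is large --- which Rafi's Theorem 5.3 converts to $d_Y(\xi,\zeta)\geq k$ --- or a further proper subsurface $Z\subset Y\subset S$ has large projection. A Fubini argument aligns the measure bounds across scales. This recursion is exactly what handles the unbounded nesting of active subsurfaces that defeats the counting approach: rather than bounding how many $M$-active subsurfaces there are, one descends into one of them and repeats, and the descent terminates because complexity drops at each step. If you want to salvage your outline, the missing ingredient is this inductive descent, not a counting bound.
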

\begin{proof} Define a \emph{shortest marking} $\mu$
for a quadratic differential $q\in {\cal Q}^1(S)$ to
consist of a  
pants decomposition
with pants curves of the shortest extremal length
for the conformal structure underlying $q$ 
constructed using the greedy algorithm. There is a
system of spanning arcs, one for each pants curve
$\alpha$ of $\mu$. 
Such an arc is a geodesic arc in the annular cover
of $S$ with fundamental group generated by $\alpha$ 
which 
intersects a $q$-geodesic representing $\alpha$ perpendicularly.  
We refer to Section 5
of \cite{R10} for details- the purpose of 
using such shortest markings here is for ease of reference 
to the results of \cite{R10}. 
A shortest marking defines a subset
of ${\cal C\cal G}$. 

In the remainder of this proof
we denote the diameter of a
subset $X$ of ${\cal C\cal G}$ by ${\rm diam}_{\cal C\cal G}(X)$.
Moreover, 
whenever we use constants
for a given surface $S$ of complexity 
$n$, we assume
that they are also valid in the same context
for surfaces of complexity smaller than $n$.

For $q\in {\cal Q}^1(S)$ and $\rho>0$ 
call a curve $\alpha$ 
\emph{$\rho$-slim} 
if $\alpha$ is \emph{not} the core curve of 
a flat cylinder for $q$ of modulus at least $\rho$.
Let $\rho>0,k>0$, let $b\in (0,1)$ and 
let $\lambda$ be the
Lebesgue measure on the real line.
Let $(q_t)$ be the cotangent line of 
a Teichm\"uller geodesic on $S$.
For each $t$ let $\mu_t$ be a
shortest marking for $q_t$.
Let $n=\xi(S)\geq 1$ be the complexity of $S$.
Let $\delta >0$ be as before. We show by induction on $n$
the following

{\bf Claim:} There is 
a number $T_n=T_n(\delta,\rho,k,b)>0$ 
and a number $f(b,n)\in (0,b]$ with the 
following property.
Assume that there is a set $A\subset [0,T_n]$ with
$\lambda(A)\geq bT_n$ such that 
for every $t\in A$ there is a  
$\delta$-wide $\rho$-slim curve $\alpha(t)$
for $q_t$. 
Then either 
\[{\rm diam}_{\cal C\cal G}(\mu_0\cup \mu_{T_n})\geq k\] 
or there 
is a set $A^\prime\subset A$ 
with $\lambda(A^\prime)\geq f(b,n)T_n$
so that for all $t\in A^\prime$, the curve $\alpha(t)$
is contained in a proper
subsurface $Y$ of $S$ 
with 
${\rm diam}(\pi_Y(\mu_{0}\cup 
\mu_{T_n}))\geq k$. 

The claim easily yields the lemma. 
Namely, by Theorem 5.3 of \cite{R10}, 
for every $k>0$ there is a number $\ell=\ell(k)>k$ with the
following property. Let $(q_s)$ be the cotangent line
of a Teichm\"uller geodesic defined
by simple closed curves $\xi,\zeta$. Let $s<t$, let
$\mu_s,\mu_t$ be shortest markings for 
$q_s,q_t$ and assume that there is a subsurface $Y$ of 
$S$ such that ${\rm diam}(\pi_Y(\mu_s\cup \mu_t))\geq \ell$;
then
\[{\rm diam}(\pi_Y(\xi\cup \zeta))\geq k.\]

Let 
$T=T(\chi,k)=T_n(\delta,\chi,\ell,\frac{1}{2})$ and let
$q_t$ be the cotangent line of a Teichm\"uller geodesic
defined by two simple closed curves $\xi,\zeta$.
For $t\in [0,T]$ let $\alpha(t)$ be a $\delta$-wide curve for
$q_t$. If there is some $t\in [0,T]$ such that
$\alpha(t)$ is \emph{not} $\chi$-slim 
then the third property in the statement
of the lemma is satisfied and we are done.
Otherwise each of the curves $\alpha(t)$ $(t\in [0,T])$ 
is $\chi$-slim and hence
we can apply the above claim to obtain
the statement of the lemma.

For the inductive proof of the claim, note that
if $\xi(S)=1$ then $S$ either is a once punctured torus
or a four times punctured sphere. 
Consider first the case of a one-punctured torus.
Quadratic differentials on such a torus are just squares of 
abelian differentials on the torus with the puncture
filled in. As a consequence, a 
$\delta$-wide $\rho$-slim simple closed 
curve in $S$ is the core curve of a flat cyclinder 
(i.e. a cylinder foliated by simple closed
geodesics) whose
modulus is at most $\rho$. 
Such flat structures on tori project to a compact 
subset of the moduli space of tori. 

As a consequence, there is a
number $\epsilon>0$ (depending on $\delta,\rho$) with 
the following property. If there is a $\delta$-wide
$\rho$-slim curve for a quadratic differential $q$ on $S$
then $\nu(q)>\epsilon$. 
The above claim now follows from Proposition \ref{progression}
with $f(b,1)=b$ (here only the first alternative 
in the claim occurs).
The case of the four punctured sphere is completely
analogous and will be omitted.

Now assume that the claim holds true for 
all surfaces of complexity at most $n-1\geq 1$.
Let $S$ be a surface of complexity $n$. Theorem 4.2 and 
Theorem 5.3 of \cite{R10} and their proofs 
(see also \cite{M92}) show the
following. 

Let $(q_t)$ be the cotangent line of a Teichm\"uller geodesic.
Assume that there is an interval
$[a,b]\subset \mathbb{R}$ and 
a proper $\epsilon_0$-semi-thick 
subsurface $Y$ of $S$ which is
$M_0$-large for every $t\in [a,b]$. Here as before,
$M_0>0$ is a constant only depending on the complexity
of $S$, chosen in such a way that the results
of \cite{R10} hold true for this number.
Let ${\rm\bf Y}_t$ be the geometric representative
of $Y$ for the quadratic differential $q_t$ and define
$q_{t,Y}$ to be the quadratic differential obtained
by capping off the boundary components 
with (perhaps degenerate) discs or 
punctured discs (see the proof of 
Theorem 4.2 in \cite{R10} for details).
Then $(q_{t,Y})$ $(t\in [a,b])$ is the cotangent line
of a Teichm\"uller geodesic on $Y$ (with the 
boundary circles of $Y$ closed by discs or 
replaced by punctures). 
Moreover, there is a number $\delta^\prime<\delta$
and a number $\rho^\prime<\rho$ such that
every essential simple closed curve in 
${\rm \bf Y}_t$ which is 
$\delta$-wide and $\rho$-slim for $q_t$ is 
$\delta^\prime$-wide and $\rho^\prime$-slim for $q_{t,Y}$.

Let $T_{n-1}=T_{n-1}(\delta^\prime,\rho^\prime,k^\prime,b/4n)$ 
be the number found for surfaces
of complexity at most $n-1$ and for the
numbers $\delta^\prime<\delta,\rho^\prime<\rho$
and a number $k^\prime>k$ which will be determined
below.
Let $M_1>M_0$ be sufficiently large that the following
holds true. Let $s\in \mathbb{R}$ and let
$Y$ be an $\epsilon_0$-semi-thick subsurface 
for $q_s$ with geometric representative ${\rm \bf Y}_s$.
Let $\beta$ be a boundary circle of $Y$ and let 
$E\subset {\rm \bf Y}_s$ be a maximal  
expanding annulus for $q_s$ 
homotopic to $\beta$. If ${\rm Mod}_{q_s}(E)\geq M_1$ 
then for $\vert t-s\vert\leq T_{n-1}$ the modulus
of the maximal expanding annulus for $q_t$
homotopic to $\beta$ which is contained
in the geometric representative ${\rm \bf Y}_t$ of $Y$
for $q_t$ and lies on the same
side of $\beta$ as $E$ is not smaller than $M_0$.

For this number $M_1$ let $\epsilon_1(\rho/2,M_1)>0$
be as in Lemma \ref{wide}.
Let $\epsilon_2<\epsilon_1(\rho/2,M_1)$ be 
sufficiently small that whenever 
$\beta$ is a curve with ${\rm Ext}_{q_s}(\beta)\leq \epsilon_2$
then ${\rm Ext}_{q_t}(\beta)\leq \epsilon_1(\rho/2,M_1)$
for all $t$ with $\vert s-t\vert \leq T_{n-1}$.

Let $\ell>0$ be sufficiently large that 
for every $q\in {\cal Q}^1(S)$ the 
diameter in ${\cal C\cal G}$ of the union of 
the set of curves
from a shortest marking for $q$ with the set of 
all $\delta$-wide curves does not exceed $\ell$ 
(such a number exists since
the extremal length and hence the hyperbolic
length of a $\delta$-wide
curve is uniformly bounded from  above, see \cite{H10}).
Let $T_n=R(\epsilon_2,k+2\ell,b/8)$ 
be as in Proposition \ref{progression}. We may assume that
\[bT_n/16\geq bT_n/8n\geq T_{n-1}.\]

Let $(q_t)$ be the cotangent line of a Teichm\"uller
geodesic on $S$. 
Proposition \ref{progression} shows that 
either $d_{\cal C\cal G}(\Upsilon(q_0),\Upsilon(q_{T_n}))\geq k+2\ell$
or there is a subset $B$ of $[0,T_n]$ with 
$\lambda(B)\geq (1-b/8)T_n$ and for every 
$s\in B$ there is a curve of extremal length 
at most $\epsilon_2$ for $q_s$. 
In the first case we
conclude from the choice of $\ell$ that
${\rm diam}_{\cal C\cal G}(\mu_0\cup \mu_{q_{T_n}})
\geq k$ and we are done. Thus assume that the second
possibility holds true. 
 
Let us now 
suppose that there exists a set 
$A\subset [0,T_n]$ 
with $\lambda(A)\geq bT_n$ such that 
for every $t\in A$ there is a $\delta$-wide 
$\rho$-slim curve 
$\alpha(t)$ for $q_t$.
Let $\chi_B,\chi_A$ be the characteristic function of $B,A$,
respectively. Since $bT_n/16\geq T_{n-1}$ and 
$\lambda(B)\geq (1-b/8)T_n$, 
for every $s\in [0,T_{n-1}]$ we have
\[\lambda\{u\in [T_{n-1},T_n-T_{n-1}]\mid u-s\in B\}
\geq (1-b/4)T_n.\]
Now $\lambda(A)\cap [T_{n-1},T_n-T_{n-1}]\geq 3bT_n/4$
and hence  
Fubini's theorem shows that 
\begin{align}
\int_0^{T_n-T_{n-1}} \bigl( \frac{1}{T_{n-1}}
\int_s^{s+T_{n-1}}\chi_B(s)\chi_A(u) du \bigr) ds &\notag\\
\geq  \frac{1}{T_{n-1}}\int_0^{T_{n-1}}\int_{T_{n-1}}^{T_n-T_{n-1}}
\chi_B(u-s)\chi_A(u)duds  &\geq 
 bT_n/2.\notag
 \end{align}
As a consequence, the Lebesgue measure of the set 
\[C=\{s\in B\cap [0,T_n-T_{n-1}]
\mid \int_s^{s+T_{n-1}}\chi_A(u)du\geq bT_{n-1}/4\}\] 
is at least $bT_{n}/4$.

If $s\in C$ then $s\in B\cap [0,T_n-T_{n-1}]$ 
and hence there is a simple closed
curve $\beta_s$ with ${\rm Ext}_{q_s}(\beta_s)\leq \epsilon_2$.
Moreover, there is a set 
\[D\subset [s,s+T_{n-1}]\] 
of Lebesgue
measure at least $bT_{n-1}/4$ so that for every 
$t\in D$ there is a   
$\delta$-wide $\rho$-long curve $\alpha(t)$
for $q_t$. 

By Lemma \ref{wide} and the choice of $\epsilon_2$,
for every $t\in D$ the curve $\alpha(t)$   
is contained in an $M_1$-large
subsurface $Y\subset S-\beta_s$ for $q_t$. 

A subsurface $Y$ of $S$ which is 
$M_1$-large for some $t\in [s,s+T_{n-1}]$ is 
$M_0$-large for every $t^\prime\in [s,s+T_{n-1}]$.
Now $M_0$-large subsurfaces for $q_t$ 
are pairwise disjoint, and their
number is at most $n$. As a consequence, 
there is a subsurface $Y\subset S-\beta_s$ which is $M_0$-large
for every $t\in [s,s+T_{n-1}]$, and
there is a subset $D_s\subset D$ of Lebesgue measure at least
$T_{n-1}b/4n$ so that 
$\alpha(t)\in Y$ for every $t\in D_s$.

Let $\psi(s),\psi(s+T_{n-1})\subset Y$ be 
shortest markings for 
$q_{s,Y},q_{s+T_{n-1},Y}$.
By the choice of $T_{n-1}$, we can apply the 
induction hypothesis to the cotangent line
$(q_{t,Y})$ $(t\in [s,s+T_{n-1}])$ of the induced
Teichm\"uller geodesic on $Y$. 

Let $H_s\subset [s,s+T_{n-1}]$
be the set of all $t\in D_s$ such that the curve $\alpha(t)$  
is contained in a proper subsurface $Z_t$ of $Y$
with the additional property that 
\[{\rm diam}(\pi_{Z_t}(\psi(s)\cup\psi(s+T_{n-1})))\geq k^\prime.\]
By induction assumption, either the diameter 
of $\psi(s)\cup \psi(s+T_{n-1})$ in 
the curve graph of $Y$ is at least $k^\prime$ 
or 
\[\lambda(H_s)\geq f(b/4n,n-1)T_{n-1}.\]

Theorem 5.3 of \cite{R10} shows that
there is a number $p>0$ with the following property.
If ${\rm diam}(\pi_Y(\psi(s)\cup \psi(s+T_{n-1})))\geq k^\prime$ 
then 
${\rm diam}(\pi_Y(\mu_0\cup 
\mu_{T_n}))\geq k^\prime-p$.
Moreover, for $t\in H_s$ 
the diameter of the subsurface projection of 
$\mu_0\cup \mu_{T_n}$ 
into $Z_t$ is at least $k^\prime-p$. 

Define $k^\prime=k+p$ and let
$H_s^\prime=D_s$ if 
${\rm diam}(\pi_Y(\psi(s)\cup \psi(s+T_{n-1})))\geq k^\prime$ 
(which implies that 
${\rm diam}(\pi_Y(\mu_0\cup \mu_{T_n}))\geq k$), and 
define $H_s^\prime=H_s$ otherwise.
Then 
\[\lambda(H_s^\prime)\geq f(b/4n,n-1)T_{n-1}\] 
for every $s\in C$, moreover 
for all $t\in H_s^\prime$ the curve
$\alpha(t)$ is contained in a subsurface $Y_s$ of $S$ 
with 
${\rm diam}(\pi_{Y_s}(\mu_0\cup \mu_{T_n}))\geq k$. 
 
For $s\in C$ 
let $\chi_s$ be the characteristic function of 
$D_s$, viewed as a subset of $\mathbb{R}$, and let 
$\chi(t)=\max\{\chi_s(t)\mid s\}$. 
Since 
$\lambda(C)\geq bT_n/4$ we have 
\[\int_0^{T_n}\chi(s)ds\geq f(b/4n,n-1)bT_n/4\]
and hence the claim holds true with $f(b,n)=bf(b/4n,n-1)/4$.
\end{proof}

Now we are ready for the proof of the Theorem from the 
introduction. To this end choose 
a number
$K_0>0$ such that the following holds true.
Let $\xi,\zeta$ be simple closed curves. If there is a 
proper subsurface $Y$ of $S$ such that
${\rm diam}(\pi_Y(\xi\cup \zeta))\geq K_0$ then a 
geodesic $\gamma$ in ${\cal C\cal G}$ 
connecting $\xi$ to $\zeta$ passes through
the complement of $Y$. The existence of
such a number $K_0>0$ follows from 
Theorem 3.1 of \cite{MM00}.

Let $(q_t)$ be the cotangent line of 
the Teichm\"uller geodesic connecting $\xi$ to 
$\zeta$. 
By the results of \cite{MM00}, there is
a number $\chi>0$ with the following property.
If $\alpha\not=\xi,\zeta$ is a simple closed curve so that 
$\alpha$ is the core curve of a flat cylinder
for $q_t$ of modulus at least $\chi$ then 
the diameter of 
the subsurface projection of $(\xi,\zeta)$ into
an annulus with core curve $\alpha$ 
is at least $K_0$.

Let $\kappa>0$ be as in (\ref{kappa}).
For these numbers $\chi,\kappa,K_0$ let 
$R=T(\chi,K_0+4\kappa)$ be as in Lemma \ref{largesize}.

Let $t_0<\dots <t_n=T$ be as in the
statement of the theorem.
Let $u$ be an integer bigger than $R$.
By Lemma \ref{largesize} and the above discussion, for all 
$\ell$ we either have 
\begin{enumerate}\item[i)]
$d_{\cal C\cal G}(\Upsilon(q_{t_\ell}),
\Upsilon(q_{t_{\ell+u}}))\geq K_0+4\kappa$ 
and hence 
$d_{\cal C\cal G}(\Upsilon(q_{t_v}),
\Upsilon(q_{t_w}))
\geq 2\kappa$ 
for all $w\geq \ell+u\geq \ell \geq v$, or
\item[ii)]  
there is a subsurface $Y$ of $S$ 
(perhaps an annulus) so that the diameter of 
the subsurface projection of $\xi,\zeta$ into $Y$ is 
at least $K_0$ and that $\Upsilon(q_{s})
\subset Y$ for some $s\in [\ell,\ell+u]$.
\end{enumerate}

It now suffices to show that 
$d_{\cal C\cal G}(\Upsilon(q_\ell),\Upsilon(q_{t_{\ell +u(8\kappa+8)}}))
\geq 2\kappa$ for all $\ell$.
By the choice of $\kappa$, this holds true if 
there is some $j\leq u(8\kappa+6)$ so that the alternative
i) above is satisfied for $q_{t_j}$.

Otherwise for every $i\leq 8\kappa+6$ there is some $s\in [ui,u(i+1)]$ 
such that
$\Upsilon(q_s)$ is contained in a subsurface $Y$  
with the property that the diameter of the 
subsurface projection of $\xi,\zeta$ into $Y$ is at least $K_0$.
By the choice of $K_0$ (see Theorem 3.1 of \cite{MM00}), 
there is a curve $c_s$ on $\gamma$ disjoint from $\Upsilon(q_{s})$.
In particular, we have
$d_{\cal C\cal G}(c_s,\Upsilon(q_{s}))\leq 1$.

Now if $v\in [uj,u(j+1)]$ for some $j$ with
$\vert i-j\vert \geq 2$
then $d_{\cal C\cal G}(c_s,c_v)\geq 1$ by assumption.
In particular, $c_s,c_v$ are distinct.
As a consequence, there are at least $4\kappa+3$ distinct points on the
geodesic $\gamma$ arising in this way.
Then there are at least two of these points,
say the points $c_s,c_v$, whose distance
is at least
$4\kappa +2$. This shows 
$d_{\cal C\cal G}(\Upsilon(q_{t_s}),
\Upsilon(q_{t_v}))\geq 4\kappa$ which implies the
required estimate.

\bigskip\bigskip

\noindent
MATH. INSTITUT DER UNIVERSIT\"AT BONN\\
ENDENICHER ALLEE 60\\ 
53115 BONN\\ 
GERMANY\\

\bigskip\noindent
e-mail: ursula@math.uni-bonn.de

\end{document}